\documentclass[11pt]{article}
\usepackage[T1]{fontenc}
\usepackage[margin=1in]{geometry}
\usepackage{amsmath,amssymb,amsthm,mathtools,bm}
\usepackage{booktabs,tabularx,array,graphicx,microtype,enumitem,xcolor,placeins,float}
\usepackage[font=small,labelfont=bf]{caption}
\usepackage[round,authoryear]{natbib}
\usepackage[colorlinks=true,allcolors=blue!55!black]{hyperref}
\hypersetup{
  pdftitle={Same Resident Strains, Different Attractors: Opposite Local Growth Signs for a Rare Third Strain},
  pdfauthor={Ruiwu Niu, Xincheng Shu, Ying Zhao, Jingyi Wang},
  pdfsubject={Multistrain epidemic dynamics, resident bistability, and invasion from rarity}
}

\graphicspath{{figures/}}
\setlist{nosep,leftmargin=*}
\newtheorem{proposition}{Proposition}[section]
\newtheorem{corollary}[proposition]{Corollary}
\theoremstyle{definition}
\newtheorem{computationalresult}[proposition]{Computational Result}
\newcommand{\R}{\mathbb{R}}
\newcommand{\one}{\bm 1}
\newcommand{\dd}{\mathrm{d}}
\newcommand{\e}{\mathrm{e}}
\newcommand{\cF}{\mathcal F}
\newcommand{\cM}{\mathcal M}
\newcommand{\cK}{\mathcal K}
\newcommand{\D}{\Delta}
\newcommand{\sbound}{\operatorname{s}}

\title{Same Resident Strains, Different Attractors:\\Opposite Local Growth Signs for a Rare Third Strain}
\author{Ruiwu Niu\textsuperscript{1},
Xincheng Shu\textsuperscript{2,3},
Ying Zhao\textsuperscript{4}, and
Jingyi Wang\textsuperscript{5}\\[0.35em]
\parbox{0.92\textwidth}{\centering\small
\textsuperscript{1}Department of Applied Data Science, Hong Kong Shue Yan University,
Hong Kong, Hong Kong SAR, China\\
\textsuperscript{2}Computational Communication Research Center, Beijing Normal University,
Zhuhai 519087, China\\
\textsuperscript{3}School of Journalism and Communication, Beijing Normal University,
Beijing 100875, China\\
\textsuperscript{4}Department of Electrical Engineering, City University of Hong Kong,
Kowloon, Hong Kong SAR, China\\
\textsuperscript{5}School of Mathematical Sciences, Shenzhen University,
Shenzhen 518060, PR China\\[0.2em]
\href{mailto:rniu@hksyu.edu}{\texttt{rniu@hksyu.edu}};
\href{mailto:yzhao396-c@my.cityu.edu.hk}{\texttt{yzhao396-c@my.cityu.edu.hk}};
\href{mailto:wangjingyi@szu.edu.cn}{\texttt{wangjingyi@szu.edu.cn}}}}
\date{}

\begin{document}
\maketitle
\begin{abstract}
Most models assess whether a newly introduced pathogen strain can grow when
rare by testing it against a steady endemic background. Resident strains,
however, need not have a single long-term behavior. Under the same parameter
values they may settle to a steady state or continue through recurring
outbreaks, and these outcomes can leave different fractions of hosts with
different infection histories. We study this possibility in a stylized
model that records immune history. For two resident strains, we find a parameter point at
which a stable endemic equilibrium and a stable outbreak cycle are both
locally attracting. Numerical continuation maps a Hopf branch and a
branch of folds of periodic orbits that bound a candidate bistability region;
their local geometry is organized by a generalized Hopf (Bautin) point. We
then embed the resident model exactly in a system with 20 states and three strains. In a
benchmark for which strain 3 infectiousness and total removal are the same
across the four host history classes, whether strain 3 grows or declines when
rare is determined by a weighted sum of the corresponding uninfected host
fractions. The equilibrium and cycle produce sufficiently different weighted
sums that strain 3, with one fixed parameter set, grows near one resident
attractor but declines near the other. A second parameter set
gives the reverse ordering, and both patterns persist over open regions of
invader parameter space. Sixty simulations of the full model with
progressively smaller inocula reproduce these linear
predictions across cycle phases and numerical solvers. The central conclusion
is that knowing which resident strains are present may not be enough to predict
initial invasion: which long-term resident attractor is realized can also matter.
Our results concern local growth from rarity, not
eventual establishment, long-term coexistence, or the dynamics after invasion.
\end{abstract}

\noindent\textbf{Keywords:} multistrain epidemic; immune history;
susceptibility shaped by infection history; susceptibility enhancement; Bautin bifurcation;
fold of periodic orbits; Floquet multiplier; alternative attractors

\section{Introduction}

The first question asked about a new pathogen strain is often whether it can
increase when rare. Classical next-generation calculations formalize this
question at a disease-free equilibrium \citep{VanDenDriesscheWatmough2002}.
Related threshold analyses have considered deterministic SEIHR models with
human migration, stochastic SEIHR systems, and SIR dynamics on metapopulation
networks coupled by interregional migration
\citep{NiuEtAl2020SEIHRMigration,NiuEtAl2021StochasticSEIHR,
NiuEtAl2024NetworkMigration}. Multistrain studies
use the same logic at resident endemic equilibria
\citep{AdamsSasaki2007}. An equilibrium is an appropriate reference when the
resident endemic equilibrium is the relevant long-term outcome. It is
incomplete when the same
resident strains, under the same parameter values, can approach different
long-term states.

Here a \emph{resident} is the strain community already circulating before a
third strain is introduced. An \emph{attractor} is a compact invariant set that
attracts trajectories from a neighborhood. The two attractors in our example are a
steady endemic equilibrium and a sustained outbreak cycle. They contain the
same two resident strains, but they do not expose the rare third strain to the
same host population. Recovery leaves hosts with a record of earlier
infections; this \emph{immune history} changes their later susceptibility.
Consequently, two resident attractors can generate different long-run
distributions of host histories even when the strains present and all model
parameters are identical.

Multistrain epidemic models that record immune history have a long lineage. Early
cross-immunity formulations led to descriptions based on exposure sets and infection status,
invasion criteria, endemic multistrain states, and oscillatory dynamics
\citep{CastilloChavezEtAl1989,AndreasenLinLevin1997,GogSwinton2002,DawesGog2002}.
Antigenically structured models can also sustain complex periodic or chaotic
dynamics \citep{GuptaFergusonAnderson1998}, and later work developed systematic
reductions for pathogens with many strains \citep{KucharskiAndreasenGog2016}
and interactions among immune histories at multiple loci \citep{McLeodBankGandon2024}.
Seasonally forced cross-immunity models already display multiple attractors and
complex cycles \citep{KamoSasaki2002}. Recent eco-evolutionary work likewise
shows that an invading variant can reshape the immune environment that first
favored it \citep{BarratCharlaixNeher2024}. State spaces that record immune history,
oscillations driven by cross-immunity, and feedback between variants and host
immunity are therefore established ingredients. Our focus is a specific
consequence that arises when alternative resident attractors are both stable.

The resident model used here has especially close antecedents.
\citet{ChungLui2016} analyzed an antecedent model with eight compartments
and partial cross-immunity. More recently, \citet{Gavish2024Oscillatory}
studied the corresponding subfamily without waning and with partial cross-immunity in this
eight-state structure, analyzed the loss of stability of its coexistence
equilibrium, and exhibited periodic solutions numerically; a related extension
addresses stable coexistence under large competitive advantage
\citep{Gavish2025Exclusion}. Generalized Hopf bifurcations, folds of periodic
orbits, and bistability between an equilibrium and a periodic orbit also have
precedents in epidemic models
\citep{MoghadasAlexander2006,OpokuSarkodieEtAl2024,ScarabelEtAl2025,
ScarabelColdwellCassidy2026}. Our
question is not whether any one of these phenomena can occur. We ask whether
their two-parameter bifurcation structure creates alternative resident attractors with
different consequences for a rare third strain.

Invasion of an endemic community containing two resident strains by a third strain was studied directly
by \citet{AdamsSasaki2007}. More generally, invasion growth rates indexed by boundary
invariant measures and attractor inheritance are established ideas in
population dynamics \citep{Schreiber2000,GeritzEtAl2002,CaiGeritz2020};
Floquet invasion criteria and criteria based on time averages for periodic epidemic environments
are likewise available \citep{MitchellKribs2019}. In the theory of invasion graphs,
the sign parity condition requires ergodic invariant measures
that have the same support, which here means the same set of resident strains,
to give a missing strain the same invasion sign
\citep{HofbauerSchreiber2022}. The theory also recognizes that this assumption
can fail outside special model classes. Our contribution is a constructive
failure of sign parity among measures with the same support in an immune history epidemic model: the ergodic
invariant measures supported on two stable resident attractors in the same
invariant face with two resident strains give opposite signs to the same fixed invader. A
local structure formed by the Bautin point and LPC curve underlies this
computed bistability between the equilibrium and the cycle.

Recent work separates initial invasion from long-run co-circulation and adapts
ecological coexistence theory to strain competition
\citep{ParkEtAl2024,ParkLevineGrenfell2026}.
We keep the same distinction throughout. ``Invasion'' below means positive
local growth from rarity near a specified resident attractor. It does not, by
itself, imply establishment, persistence, or stable three-strain coexistence.

The study has three linked parts. First, we identify a parameter region in
which the resident two-strain system has both a stable equilibrium and a stable
cycle. Second, we embed that system exactly in a three-strain model and derive
the third strain's local growth rate around each resident attractor. Third, we
show that holding the resident parameters and the invader parameter set fixed
while changing only the resident attractor reverses the growth sign. With a
second invader parameter set, strain 3 has the reverse ordering, and both patterns persist
over open regions of
invader parameter space. Figure~\ref{fig:story} gives the logic in one view. At the
same resident parameter point, different initial conditions can lead to a
steady endemic state or a recurring epidemic cycle. Those outcomes leave
different mixtures of host immune histories, so a rare third strain does not
encounter the same environment. Its initial growth can therefore change sign
even though the resident strains, resident parameters, and parameter set for the third strain
are unchanged.

\FloatBarrier

\begin{figure}[t]
\centering
\includegraphics[width=\textwidth]{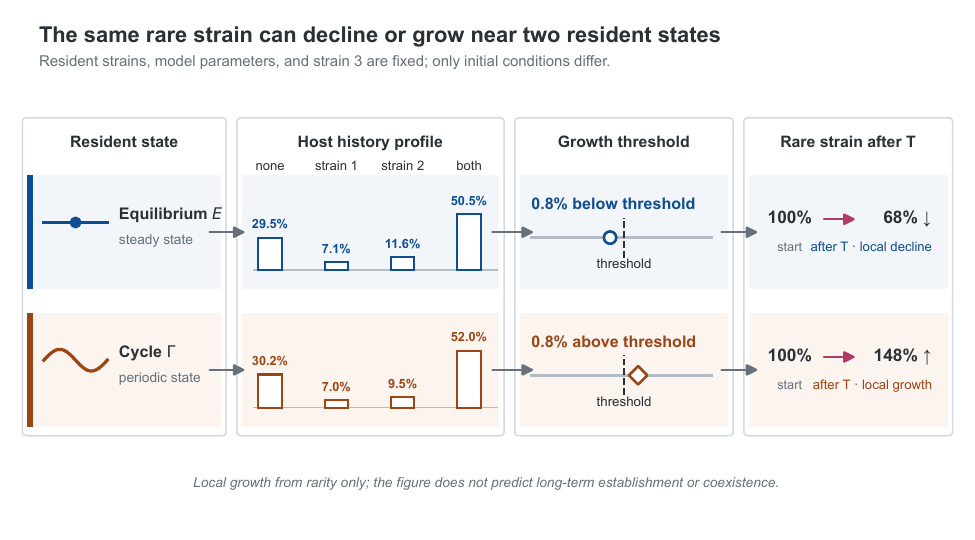}
\caption{How two resident states give the same rare strain different local
growth conditions. At \(p_0=(\gamma_2,\eta)=(1,10^{-3})\), both rows keep the
resident strains, resident parameters, and complete parameter set of strain 3
fixed. Different initial conditions select either the equilibrium
\(E\) in the top row or the cycle \(\Gamma\) in the bottom row. Both are stable
attractors of the resident subsystem without strain 3. The four
small bars show the fractions of the full population in the uninfected history
classes with no prior infection, prior strain 1, prior strain 2, or both
histories. They are equilibrium values for \(E\) and time averages over one
cycle for \(\Gamma\), and all bars use the same zero baseline and scale. For
the fixed strain 3, the effective transmission multipliers associated with
these four classes are \(1\), \(0.2\), \(0.1\), and \(0.02\). Applying these
weights gives \(D_E=0.3308\) and
\(D_\Gamma=0.3358\). The fixed strain 3 has growth threshold
\(D^*=0.3333\), so its effective opportunity is 0.8\% below the threshold near
\(E\) and 0.8\% above it near \(\Gamma\). Over the same interval \(T\), equal
to one period of the resident cycle, rare active strain 3 infection is
multiplied by 0.68 near \(E\) and by 1.48 near \(\Gamma\). The complete history
values for these four uninfected history classes and their ranges over the
cycle are shown in Fig.~\ref{fig:invasion}(a). In this benchmark,
infectiousness and total removal do not depend on history, so
Eq.~\eqref{eq:measure_r} makes the weighted score exact. In a general periodic
environment the corresponding criterion is obtained from the Floquet
monodromy matrix. These factors describe local growth from rarity and do not
predict establishment, coexistence, or exclusion over the long term after a
finite introduction.}
\label{fig:story}
\end{figure}

\section{The resident subsystem with two strains}
\label{sec:resident}
Before strain 3 is introduced, the dynamics are governed by the subsystem with
two strains described below. We call this restriction without strain 3 the
\emph{resident subsystem with two strains}, following the terminology for residents
and invaders used by \citet{CaiGeritz2020}. It is the
form without mutation, normalized by total population, of the competitive SIRS
model with two variants introduced by \citet{NiuEtAl2023}. This section restates the model in the
notation for immune history used for the extension to three strains below.
\FloatBarrier

\subsection{Immune history equations for eight states}

All state variables are population fractions. Let
\[
x=(S,I_1,I_2,I_{1\mid2},I_{2\mid1},R_1,R_2,P_{12})^\top.
\]
Here \(S\) denotes hosts with no recorded immunity, \(I_i\) primary infection
with strain \(i\), \(I_{1\mid2}\) infection by strain 1 after strain 2, and
\(I_{2\mid1}\) the reverse order. The uninfected states \(R_i\) retain only
history \(i\), whereas \(P_{12}\) retains both histories. Put
\[
Y_1=I_1+I_{1\mid2},\qquad Y_2=I_2+I_{2\mid1}.
\]
Primary effective transmission coefficients are \(\alpha_i\), and
history-specific secondary effective transmission coefficients are \(\gamma_i\).
The recovery rates are \(\beta_i\) and
\(\beta_{i\mid j}\). Memory is lost at rate \(\eta\); balanced birth and
background death occur at rate \(d\). Mutation is excluded.

The host flow is simple even though the notation is compact. A host with no
recorded history begins in \(S\). Infection with strain \(i\) moves the host
to \(I_i\), and recovery moves the host to \(R_i\). A later infection with the
other strain moves the host to \(I_{j\mid i}\), after which recovery adds both
infections to the recorded history \(P_{12}\). While uninfected, a host loses
the entire recorded history at rate \(\eta\) and returns to \(S\). Thus
``immune history'' is a coarse model state recording which strains have been
cleared and, during infection, the current strain. It is neither a measured
immunological titre nor a complete chronological exposure record.

The resident subsystem is
\begin{align}
\dot S={}&-(\alpha_1Y_1+\alpha_2Y_2+d)S+\eta(R_1+R_2+P_{12})+d,\label{eq:residentS}\\
\dot I_1={}&\alpha_1Y_1S-(\beta_1+d)I_1,\\
\dot I_2={}&\alpha_2Y_2S-(\beta_2+d)I_2,\\
\dot I_{1\mid2}={}&\gamma_1Y_1R_2-(\beta_{1\mid2}+d)I_{1\mid2},\\
\dot I_{2\mid1}={}&\gamma_2Y_2R_1-(\beta_{2\mid1}+d)I_{2\mid1},\\
\dot R_1={}&\beta_1I_1-(\gamma_2Y_2+\eta+d)R_1,\\
\dot R_2={}&\beta_2I_2-(\gamma_1Y_1+\eta+d)R_2,\\
\dot P_{12}={}&\beta_{1\mid2}I_{1\mid2}+\beta_{2\mid1}I_{2\mid1}
-(\eta+d)P_{12}.\label{eq:residentP}
\end{align}
The bifurcation parameters are the strain-2 secondary effective transmission coefficient
\(\gamma_2\) and the waning rate \(\eta\). Table~\ref{tab:parameters} lists
the fixed benchmark values.

\begin{table}[H]
\centering
\caption{Resident benchmark. Rates are per day. The two entries in the last
row vary in the two-parameter computations.}
\label{tab:parameters}
\begin{tabular}{lll}
\toprule
Parameter & Value & Interpretation\\
\midrule
\(\alpha_1,\alpha_2\) & \(0.3,\,0.1\) & transmission with no recorded history\\
\(\beta_1,\beta_2,\beta_{1\mid2},\beta_{2\mid1}\) & \(0.1\) & recovery\\
\(\gamma_1\) & \(0.1\) & strain-1 effective transmission coefficient after strain 2\\
\(d\) & \(10^{-4}\) & balanced birth/background death\\
\(\gamma_2,\eta\) & variable & strain-2 effective transmission coefficient after strain 1; waning\\
\bottomrule
\end{tabular}
\end{table}

The benchmark is a stylized example for the analysis of nonlinear dynamics rather than a
calibration to a particular pathogen. At the parameter point used in the main
example,
\(\gamma_2=1\) whereas \(\alpha_2=0.1\): a recorded strain-1 infection
increases the modeled effective transmission coefficient for strain 2 tenfold.
The model therefore allows both cross-protection and phenomenological
susceptibility enhancement associated with infection history; in this benchmark,
prior strain 1 increases susceptibility to strain 2. Complete memory reset
is another simplifying assumption. These choices provide a clean setting in
which to isolate the attractor effect; they should not be read as fitted
biological estimates.

\FloatBarrier

\begin{proposition}[Positive simplex and tangent-space linearization]
\label{prop:simplex8}
For nonnegative finite rates and \(d>0\), the simplex
\[
\D_7=\{x\in\R_+^8:\one^\top x=1\}
\]
is positively invariant under the system from \eqref{eq:residentS} through
\eqref{eq:residentP}, and
solutions starting in \(\D_7\) exist for all forward time. If
\(Q_\D\in\R^{8\times7}\) has orthonormal columns spanning \(\one^\perp\), the
tangent-space linearization is
\[
J_\D=Q_\D^\top D_xf(x)Q_\D.
\]
Hopf and Floquet stability on \(\D_7\) must be determined from this tangent
dynamics, not from the direction normal to the constant-population simplex.
\end{proposition}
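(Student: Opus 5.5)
The plan has three parts: a conservation law, a boundary (quasi-positivity) check, and a linear-algebra identity for the tangent-space Jacobian.

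\emph{Conservation law.} Summing the right-hand sides of \eqref{eq:residentS}--\eqref{eq:residentP}, all transmission, recovery and waning terms cancel in pairs:
\begin{itemize}
\item each $\alpha_iY_iS$ leaves $S$ and enters $I_i$;
\item each $\gamma_iY_iR_j$ leaves $R_j$ and enters $I_{i\mid j}$;
\item each $\beta$-term leaves an infected class and enters a recovered class;
\item each $\eta$-term returns a recovered host to $S$.
\end{itemize}
What remains is $d-d\,\one^\top x$. Hence $N=\one^\top x$ satisfies $\dot N=d(1-N)$, so $N(t)=1+(N(0)-1)\e^{-dt}$, and the hyperplane $\{N=1\}$ is invariant.

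\emph{Positive invariance.} The vector field is quasi-positive. On each face $\{x_k=0\}$ of $\R_+^8$ with the remaining coordinates nonnegative, $\dot x_k\ge 0$:
\begin{itemize}
\item $\dot S=d+\eta(R_1+R_2+P_{12})\ge d>0$ when $S=0$;
\item $\dot I_1=\alpha_1 I_{1\mid2}S\ge0$ when $I_1=0$, and similarly for $I_2$ and for $I_{i\mid j}$ (e.g.\ $\gamma_1 I_1R_2\ge0$);
\item $\dot R_i=\beta_iI_i\ge0$ and $\dot P_{12}\ge0$ on their faces.
\end{itemize}
Because the field is polynomial, hence locally Lipschitz, the standard tangency criterion for closed convex sets (Nagumo/Brezis) gives forward invariance of $\R_+^8$. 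The careful route is to perturb by $+\varepsilon$ in each equation, obtain strict inward pointing, and pass to the limit $\varepsilon\to0$ using continuous dependence on the vector field. Intersecting with the invariant hyperplane shows $\D_7$ is positively invariant.

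\emph{Global existence.} A solution in $\D_7$ stays in a compact set, so it cannot blow up in finite time. The local solution therefore extends to all $t\ge0$.

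\emph{Linearization.} Write $V=\one^\perp$, the tangent space of the affine hull of $\D_7$. Differentiating $\one^\top f(x)=d(1-\one^\top x)$ gives $\one^\top D_xf=-d\,\one^\top$. This does not make $V$ invariant under $D_xf$ in general. What it does give is that the quotient map on $\R^8/V$ is multiplication by $-d$. So in the orthonormal basis $[Q_\D,\one/\sqrt8]$, the Jacobian is block upper triangular. Its diagonal blocks are $Q_\D^\top D_xfQ_\D$ and $-d$; the block $\one^\top D_xfQ_\D/\sqrt8=-d\,\one^\top Q_\D/\sqrt8=0$ vanishes, and the off-diagonal block lies in the other corner.

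For dynamics restricted to the invariant affine set $\{N=1\}$, write $x=x_*+Q_\D z$. Then $\dot z=Q_\D^\top f(x_*+Q_\D z)$ exactly, because $f$ is tangent to the hyperplane there (as $\one^\top f=0$ on $N=1$), so $Q_\D Q_\D^\top f=f$. Its linearization is $J_\D$.

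Consequently the spectrum of the full Jacobian is that of $J_\D$ together with $-d$, and Floquet multipliers are those of the $z$-variational equation together with $\e^{-dT}$. The extra eigenvalue $-d$ (or multiplier $\e^{-dT}$) is the normal direction and carries no information about the dynamics on $\D_7$. Hopf conditions and Floquet stability must therefore be read from $J_\D$, as claimed.

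The main obstacle is the boundary-invariance step done rigorously; the remaining steps are bookkeeping.
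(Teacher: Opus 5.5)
Your proposal is correct and follows the paper's proof: quasi-positivity on the faces of $\R_+^8$, the summation identity $\frac{\dd}{\dd t}\one^\top x=d(1-\one^\top x)$, compactness of $\D_7$ for global existence, and restriction of the Jacobian to the invariant affine hyperplane, with the block-triangular splitting into the spectrum of $J_\D$ plus $-d$ (which the paper leaves implicit) spelled out. One slip: $\one^\top D_xf=-d\,\one^\top$ does make $\one^\perp$ invariant under $D_xf$, since that is exactly what your vanishing block $\one^\top D_xfQ_\D/\sqrt8=0$ says and it is also needed for the quotient map on $\R^8/\one^\perp$ to be defined; what fails in general is invariance of the normal line spanned by $\one$.
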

\begin{proof}
Every coordinate derivative is nonnegative when that coordinate is zero.
Direct summation gives
\[
\frac{\dd}{\dd t}\one^\top x=d(1-\one^\top x).
\]
Unit total population is therefore preserved. Compactness of the simplex and
local Lipschitz continuity give global forward existence. The tangent-space
formula restricts the Jacobian to the invariant affine hyperplane.
\end{proof}

\subsection{Conditions for Hopf, generalized Hopf, and folds of periodic orbits on the simplex}

At a coexistence equilibrium \(x^*(p)\), a Hopf point of the
simplex-restricted dynamics has one simple
pair \(\pm i\omega\) in \(J_\D\), \(\omega>0\), no other spectrum on the
imaginary axis, and a transverse crossing along the equilibrium branch. In
local tangent coordinates, define
\[
g(z;p)=Q_\D^\top f\bigl(x^*(p)+Q_\D z;p\bigr).
\]
At the Hopf point we use the factorial normal form convention
\[
g(z;p^*)=J_\D z+\tfrac12B(z,z)+\tfrac16C(z,z,z)+\cdots,
\]
with \(J_\D q=i\omega q\), \(J_\D^H\ell=-i\omega \ell\), and \(\ell^Hq=1\).
The first and second Lyapunov coefficients are denoted \(l_1\) and \(l_2\).
A nondegenerate generalized Hopf, or Bautin, point additionally satisfies
\(l_1=0\), nonzero variation of \(l_1\) along the Hopf curve, and \(l_2\ne0\).

For a periodic boundary-value problem, a phase condition fixes the
time-translation degeneracy. The full resident monodromy matrix still has the
Floquet multiplier \(+1\) associated with time translation. At a
nondegenerate \emph{limit point of cycles} (LPC), equivalently a fold
(saddle-node bifurcation) of periodic orbits, there is a second multiplier
\(+1\); it is simple after restriction to a Poincar\'e section. The
fixed-parameter boundary-value problem has a one-dimensional kernel and
satisfies the standard parameter-transversality and quadratic nondegeneracy
conditions.

These terms have a direct dynamical meaning. A Hopf bifurcation is where an
equilibrium gains or loses an oscillatory mode. A generalized Hopf point is
where the Hopf bifurcation changes from producing a stable small-amplitude
periodic orbit to an unstable one, or conversely. At the LPC considered here,
a stable and an unstable periodic orbit meet while the remaining nontrivial
multipliers lie inside the unit circle. Their local organization can therefore
create a region in which the equilibrium and cycle are both stable.

\begin{proposition}[Truncated Bautin normal form and local organization]
\label{prop:bautinlocal}
Suppose, in addition to the nondegenerate Bautin conditions above, that every
noncritical eigenvalue has negative real part. Near such a point with
\(l_2<0\), smooth state, time, and parameter changes give the quintic
truncated radial normal form
\[
\dot\varrho=\varrho\{\xi_1+\xi_2\varrho^2-\varrho^4\}.
\]
For \(\xi_2>0\) and \(-\xi_2^2/4<\xi_1<0\), this truncation contains a stable
equilibrium, an inner unstable cycle, and an outer stable cycle. Its two cycles
meet on \(\xi_1=-\xi_2^2/4\). In the full smooth system, the corresponding
local branches and their stability persist sufficiently near the Bautin point
away from the bifurcation curves, and the LPC satisfies
\(\xi_1=-\xi_2^2/4+o(\xi_2^2)\).
\end{proposition}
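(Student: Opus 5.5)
The argument has three parts: reduce to a center manifold, analyze the truncated normal form explicitly, and then pass back to the full system by a persistence argument.

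\emph{Reduction.} Work in the tangent coordinates \(z\) of Proposition~\ref{prop:simplex8}, so that \(J_\D\) is the relevant linearization on \(\D_7\). By hypothesis, at the Bautin point the spectrum of \(J_\D\) consists of the simple pair \(\pm i\omega\) and eigenvalues with negative real part. The center manifold theorem, with parameters, then gives a locally attracting two-dimensional parameter-dependent center manifold carrying all nearby bounded invariant sets. Stability on this manifold therefore equals stability in the seven-dimensional tangent dynamics, and by exponential attraction also on \(\D_7\).

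On the center manifold I will invoke the standard Bautin normal form theorem (Kuznetsov, Elements of Applied Bifurcation Theory, Thm.~8.2). Its hypotheses are: \(l_1=0\); the map \(p\mapsto(\mu(p),l_1(p))\) is regular, where \(\mu\) is the real part of the critical eigenvalue; and \(l_2\neq0\). The regularity of this map follows from the transverse crossing and from the assumed nonzero variation of \(l_1\) along the Hopf curve. The theorem supplies smooth invertible changes of coordinates, time and parameters bringing the system to
\[
\dot w=(\xi_1+i)w+\xi_2 w|w|^2+s\,w|w|^4+O(|w|^6),\qquad s=\operatorname{sign} l_2 .
\]
With \(l_2<0\) we have \(s=-1\), and in polar coordinates the truncation has radial part \(\dot\varrho=\varrho\{\xi_1+\xi_2\varrho^2-\varrho^4\}\), decoupled from the angle.

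\emph{Truncated analysis.} Write \(u=\varrho^2\). Nontrivial cycles correspond to positive roots of \(h(u)=\xi_1+\xi_2u-u^2\). For \(\xi_2>0\) and \(-\xi_2^2/4<\xi_1<0\), the discriminant \(\xi_2^2+4\xi_1\) is positive, and both roots \(u_\pm=\tfrac12(\xi_2\pm\sqrt{\xi_2^2+4\xi_1})\) are positive because their sum \(\xi_2\) is positive and their product \(-\xi_1\) is positive. The sign of \(\dot\varrho\) equals the sign of \(h\), which is negative on \((0,u_-)\), positive on \((u_-,u_+)\) and negative beyond \(u_+\). Hence the origin is stable (since \(\xi_1<0\)), \(\varrho_-\) is an unstable cycle and \(\varrho_+\) is a stable cycle. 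The two roots coalesce exactly at \(\xi_1=-\xi_2^2/4\), which is a nondegenerate double root since \(h''=-2\neq0\).

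\emph{Persistence.} This is the main obstacle, because the \(O(|w|^6)\) terms break the rotational symmetry. I will follow Kuznetsov's treatment. Rescale \(w=\sqrt{\xi_2}\,v\), \(\xi_1=\xi_2^2\nu\) and time by \(\xi_2^2\), and restrict to a bounded range of \(\nu\). After averaging over the angle, the radial equation becomes the truncated one plus an \(O(\sqrt{\xi_2})\) perturbation in \(C^1\).

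Away from the double root the cycles are hyperbolic, so the truncated Poincaré map has hyperbolic fixed points, and these persist with unchanged stability by the implicit function theorem. At the double root, the Poincaré map displacement has a nondegenerate fold in the radius. Applying the implicit function theorem to the system displacement \(=0\), \(\partial_\varrho(\text{displacement})=0\) yields a smooth fold curve \(\nu=-1/4+O(\sqrt{\xi_2})\). In the original variables this reads \(\xi_1=-\xi_2^2/4+o(\xi_2^2)\).

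Finally, transport everything back through the smooth parameter change and the attracting center manifold. The stable equilibrium, the inner unstable cycle and the outer stable cycle, together with the LPC curve, then persist sufficiently near the Bautin point in the original \((\gamma_2,\eta)\) parameters, away from the bifurcation curves.
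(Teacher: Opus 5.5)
Your proposal is correct and follows the same route as the paper. The paper cites the standard generalized Hopf normal form (Kuznetsov), analyzes the two positive roots of \(\xi_1+\xi_2r-r^2\) with \(r=\varrho^2\), and attributes persistence and the LPC asymptotic to the nondegenerate unfolding; you fill in the steps it leaves to the citation (attracting center-manifold reduction, regularity of \(p\mapsto(\mu,l_1)\) from transversality plus nonzero \(l_1\)-variation, and the \(\xi_2\)-rescaled Poincar\'e displacement argument), with one small tightening: the fold step applies the implicit function theorem to the displacement and its \(\varrho\)-derivative, so it needs the \(O(\sqrt{\xi_2})\) closeness in \(C^2\) rather than \(C^1\), which the smooth \(O(|w|^6)\) remainder gives by the same estimate.
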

\begin{proof}
This is the standard two-parameter generalized Hopf normal form
\citep{Kuznetsov2023}. For the quintic truncation, writing
\(r=\varrho^2\) gives the two positive roots of
\(\xi_1+\xi_2r-r^2=0\); their radial derivatives give the stated stability
ordering. Persistence and the LPC asymptotic follow from the nondegenerate
generalized Hopf unfolding.
\end{proof}

Proposition~\ref{prop:bautinlocal} is a conditional local theorem. The
coordinates and branches reported below are numerical rather than analytic
existence results.

\section{Embedding with three strains and invasion determined by the attractor}
\label{sec:invasion_theory}
\subsection{An immune history model with 20 states}

The three-strain construction applies the same bookkeeping rule to all subsets
of \(\{1,2,3\}\). An uninfected class records which strains a host has previously
cleared. An infected class records that history together with the current
strain. There are eight possible uninfected histories and twelve allowable
infection states, giving 20 states in total. Homologous reinfection is excluded:
a host whose history already contains strain \(i\) cannot acquire strain \(i\) again.

Figure~\ref{fig:state_model} shows how one infection and recovery rule generates
all 20 compartments. It is a map of compartment labels and transitions, rather
than a low-dimensional phase portrait.

\begin{figure}[t]
\centering
\includegraphics[width=\textwidth]{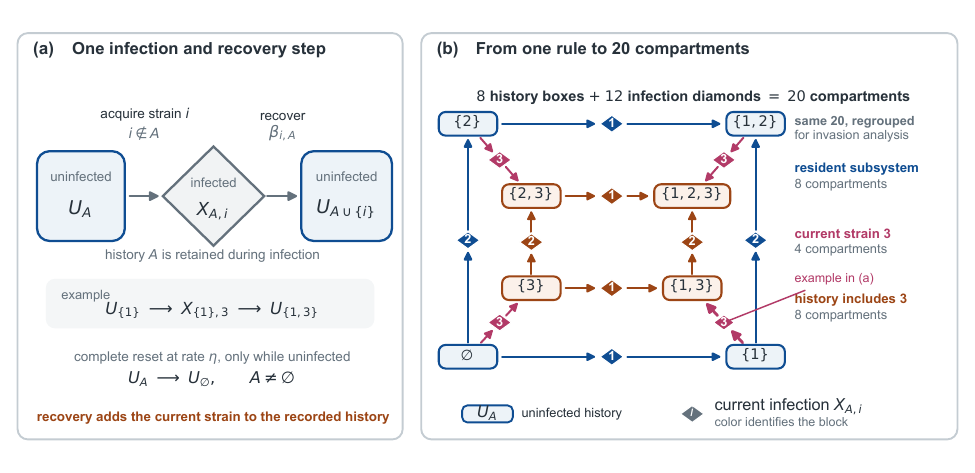}
\caption{Construction of the immune history model with three strains. (a) An
uninfected compartment \(U_A\) records the set \(A\) of strains cleared
previously. If \(i\notin A\), infection moves a host to \(X_{A,i}\) while
retaining \(A\); recovery adds \(i\) and moves the host to
\(U_{A\cup\{i\}}\). Complete reset \(U_A\to U_\varnothing\) occurs at rate
\(\eta\) for \(A\ne\varnothing\), only while the host is uninfected. Births enter \(U_\varnothing\),
and background death acts on every compartment. (b) The eight possible
uninfected histories form the vertices of a Boolean cube, where moving along
an edge adds one strain to the recorded history. Each of its 12 directed edges
is split by one current infection compartment, giving \(8+12=20\)
compartments. Rounded boxes denote \(U_A\), and diamonds marked \(i\) denote
\(X_{A,i}\). The emphasized path
\(U_{\{1\}}\to X_{\{1\},3}\to U_{\{1,3\}}\) is the example from panel (a).
Colors group compartments for the invasion analysis; they do not assign a
general color to each strain. The outer square and its strain 1 or 2 edge states give the
resident subsystem with eight states. The four connecting diamonds form the
active strain 3 block \(z_3\). The inner square contains the eight states in
\(h_3\) whose history includes strain 3. This is a compartment indexing
diagram, not a phase portrait. Homologous reinfection, simultaneous
coinfection, and mutation are excluded.}
\label{fig:state_model}
\end{figure}
\FloatBarrier

Let \(\mathcal N=\{1,2,3\}\). For each \(A\subseteq\mathcal N\), let \(U_A\)
be the uninfected fraction with immune history \(A\). For \(i\notin A\), let
\(X_{A,i}\) be the fraction infected by strain \(i\) with pre-infection history
\(A\). Thus the state count is \(2^3+3\,2^2=20\).
Define
\[
\Lambda_i=\sum_{A:i\notin A}\tau_{i,A}X_{A,i},
\]
where \(c_{i,A}\ge0\) is the history-specific effective transmission
coefficient for strain \(i\) in hosts with history \(A\),
\(\tau_{i,A}\ge0\) is relative
infectiousness, and \(\beta_{i,A}\ge0\) is recovery. Recovery adds \(i\) to
the immune history set. Complete memory reset at rate \(\eta\) occurs only
while uninfected. There is no mutation and no simultaneous coinfection. The
equations are
\begin{align}
\dot U_A={}&d\mathbf1_{A=\varnothing}
+\eta\mathbf1_{A=\varnothing}\sum_{B\ne\varnothing}U_B
-\eta\mathbf1_{A\ne\varnothing}U_A-dU_A\notag\\
&-\sum_{i\notin A}c_{i,A}U_A\Lambda_i
+\sum_{i\in A}\beta_{i,A\setminus\{i\}}X_{A\setminus\{i\},i},\label{eq:UA}\\
\dot X_{A,i}={}&c_{i,A}U_A\Lambda_i-(\beta_{i,A}+d)X_{A,i},
\qquad i\notin A.\label{eq:XAi}
\end{align}
Let \(x_{20}\) collect all 20 coordinates \(U_A\) and \(X_{A,i}\). As in
Proposition~\ref{prop:simplex8}, summation gives
\(\frac{\dd}{\dd t}\one^\top x_{20}=d(1-\one^\top x_{20})\), so the
19-dimensional probability simplex is positively invariant.

Let \(\cF_{12}\) be the strain-3-free invariant face on which every active strain-3
infection and every state whose history contains strain 3 vanishes. Its
nonzero states are
\[
(U_\varnothing,X_{\varnothing,1},X_{\varnothing,2},
X_{\{2\},1},X_{\{1\},2},U_{\{1\}},U_{\{2\}},U_{\{1,2\}}),
\]
which identify with the eight states of Section~\ref{sec:resident}. On this
face we set
\[
\begin{gathered}
c_{1,\varnothing}=\alpha_1,\quad c_{1,\{2\}}=\gamma_1,\quad
c_{2,\varnothing}=\alpha_2,\quad c_{2,\{1\}}=\gamma_2,\\
\tau_{1,\varnothing}=\tau_{1,\{2\}}=
\tau_{2,\varnothing}=\tau_{2,\{1\}}=1,\\
\beta_{1,\varnothing}=\beta_1,\quad
\beta_{1,\{2\}}=\beta_{1\mid2},\quad
\beta_{2,\varnothing}=\beta_2,\quad
\beta_{2,\{1\}}=\beta_{2\mid1}.
\end{gathered}
\]
These parameter identifications make the restriction exact.

In plain terms, setting every strain-3 state to zero leaves exactly the
two-strain model, rather than an approximation to it. We therefore treat this
invariant two-strain boundary as the resident subsystem into which strain 3 is
introduced.

\subsection{Linearized invasion dynamics transverse to the resident boundary}

Here ``transverse'' means perturbing the strain-3-free resident boundary by
adding an infinitesimal amount of strain 3. It does not refer to perturbations
within the two-strain resident subsystem.

Along a resident trajectory in \(\cF_{12}\), define
\[
u(t)=(U_\varnothing,U_{\{1\}},U_{\{2\}},U_{\{1,2\}})^\top
\]
and order the four rare active infections as
\[
z_3=(X_{\varnothing,3},X_{\{1\},3},X_{\{2\},3},X_{\{1,2\},3})^\top.
\]
Let \(h_3\) contain the remaining states whose histories include strain 3,
and let \(y\) denote resident variables. Put
\(\nu_{3,A}=\beta_{3,A}+d\). The strain-3 transverse coefficient
matrix is
\begin{equation}
L_3(t)=\bigl(u(t)\odot c_3\bigr)\tau_3^\top-\operatorname{diag}(\nu_3).
\label{eq:L3}
\end{equation}

\begin{proposition}[Exact face and transverse reduction]
\label{prop:block}
The face \(\cF_{12}\) is positively invariant, and its restriction is exactly
the system from \eqref{eq:residentS} through \eqref{eq:residentP}. Ordered as
\((z_3,h_3,y)\), the
coefficient matrix of the full variational equation along a resident solution
is lower block triangular,
\begin{equation}
J(t)=
\begin{pmatrix}
L_3(t)&0&0\\
C(t)&H_3(t)&0\\
B_z(t)&B_h(t)&J_{12}(t)
\end{pmatrix}.
\label{eq:blockJ}
\end{equation}
The homogeneous subsystem of states with a history of strain 3
\(\dot h_3=H_3(t)h_3\) is uniformly exponentially stable; for every
nonnegative solution of this homogeneous subsystem,
\[
\|h_3(t)\|_1\le \e^{-dt}\|h_3(0)\|_1.
\]
Consequently, the only new transverse invasion instability is determined by
\(L_3\).
\end{proposition}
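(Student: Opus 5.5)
The proof splits into four steps: invariance of the face, the block structure of the Jacobian, a Lyapunov-type bound on the strain-3 history block, and the resulting spectral conclusion. For invariance, I set every strain-3 coordinate to zero in \eqref{eq:UA}--\eqref{eq:XAi}. Every $X_{A,3}$ is then zero, so $\Lambda_3=0$ and each $\dot X_{A,3}=c_{3,A}U_A\Lambda_3-\nu_{3,A}X_{A,3}$ vanishes. For $A\ni3$, I check that $\dot U_A$ and $\dot X_{A,i}$ are also zero. Their gain terms are either recovery inflows $\beta_{3,A\setminus\{3\}}X_{A\setminus\{3\},3}=0$, or infection inflows $c_{i,A}U_A\Lambda_i$ and recovery inflows $\beta_{i,B}X_{B,i}$ that involve states whose history already contains $3$ and hence are zero. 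Reset only sends mass from $U_A$ to $U_\varnothing$, never into histories containing $3$. By uniqueness, solutions starting on $\cF_{12}$ remain there. Combined with positive invariance of the simplex, this gives positive invariance of the face. Substituting the parameter identifications into the eight surviving equations reproduces \eqref{eq:residentS}--\eqref{eq:residentP} term by term; for example, reset into $U_\varnothing$ gives $\eta(R_1+R_2+P_{12})$, and $\Lambda_1=Y_1$, $\Lambda_2=Y_2$.

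For the block structure, I differentiate the vector field at a point of $\cF_{12}$ and check which partial derivatives vanish. The $z_3$ rows are $\dot X_{A,3}=c_{3,A}U_A\Lambda_3-\nu_{3,A}X_{A,3}$, where $\Lambda_3=\tau_3^\top z_3$. Their derivative with respect to any $U_A$ is proportional to $\Lambda_3=0$, so the only nonzero block is the one with respect to $z_3$, namely $(u\odot c_3)\tau_3^\top-\operatorname{diag}(\nu_3)=L_3$. The $h_3$ equations contain only two kinds of terms. The first are products in which at least one factor is an $h_3$ or $z_3$ coordinate; their derivatives with respect to $y$ are multiplied by zero factors and vanish. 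The second are linear terms in $h_3$ or $z_3$. Hence the $(h_3,y)$ block is $0$. The $y$-rows can depend on everything, which gives the lower block triangular form \eqref{eq:blockJ}.

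The stability of $H_3$ is the step needing care. My plan is a column-sum (Metzler, $\ell_1$) argument. $H_3(t)$ is obtained from the $h_3$ equations with $z_3=0$, linearized about $h_3=0$. Its entries are of three kinds:
\begin{itemize}
\item infection inflows $c_{i,A}U_A\tau_{i,B}X_{B,i}$ linearized in $X_{B,i}\in h_3$, with $U_A$ taken from the resident coordinates only when $A\ni3$;
\item recovery flows between $h_3$ states;
\item losses $-(\eta+d)U_A$ and $-(\beta+d)X$.
\end{itemize}
I must double-check one point here. A linearized term $c_{i,A}U_A\Lambda_i$ with $A\ni3$ is zero to first order: $U_A$ is itself an $h_3$ coordinate, and $\Lambda_i$ evaluated at the resident point may be nonzero, so this term contributes a diagonal loss $-c_{i,A}\Lambda_i$ that is matched by an off-diagonal gain into $X_{A,i}$. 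The infection inflows, by contrast, are products of two $h_3$ coordinates, so they are quadratic and drop out. Thus $H_3$ is Metzler. Each column sum is at most $-d$, because all transfers are conserved within $h_3$ and outflows to $U_\varnothing$ via reset exit the block. For nonnegative solutions, $\frac{\dd}{\dd t}\one^\top h_3\le -d\,\one^\top h_3$, and Gr\"onwall gives the bound. Since $H_3$ is Metzler, nonnegative data stay nonnegative. Splitting general data into positive and negative parts then yields $\|\Phi_{H_3}(t,0)\|_1\le\e^{-dt}$, i.e.\ uniform exponential stability.

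The final claim follows from the block-triangular structure. The evolution operator of \eqref{eq:blockJ} is block lower triangular, with diagonal blocks $\Phi_{L_3}$, $\Phi_{H_3}$, and $\Phi_{J_{12}}$. For a periodic resident orbit or an equilibrium, the Floquet multipliers (or eigenvalues) are therefore the union of those of the three blocks. $J_{12}$ is the resident linearization, which is already accounted for within the face. $\Phi_{H_3}$ contributes only multipliers of modulus at most $\e^{-dT}<1$. Any new instability transverse to $\cF_{12}$ must therefore come from $L_3$. I expect the main obstacle to be the bookkeeping in the $H_3$ column-sum estimate, specifically confirming that every gain entry into an $h_3$ state is matched by a loss from another $h_3$ state and that no transfer re-enters the block.
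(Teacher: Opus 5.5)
Your proposal is correct and follows essentially the same route as the paper. The paper also argues from zero inflow into all strain-3 coordinates on $\cF_{12}$, then inspects the first-order couplings to get the block-triangular form, and finally uses a summation (column-sum) estimate together with preservation of nonnegativity to obtain the $\ell^1$ decay; your extension to signed data via positive and negative parts, and your union-of-spectra argument for the final claim, make explicit what the paper leaves implicit.

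One correction to the exposition: your first bullet and the phrase ``zero to first order'' misdescribe the infection entries of $H_3$. The only nonzero infection entries are the transfers at rate $c_{i,A}\Lambda_i(t)$ from $U_A$ to $X_{A,i}$ with $A\ni 3$, as you yourself state immediately afterwards. Every term proportional to $U_A\,\delta\Lambda_i$ vanishes because $U_A=0$ on the face.
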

\begin{proof}
With no mutation into strain 3, all strain-3 coordinates have zero inflow on
\(\cF_{12}\). The surviving fluxes match the resident equations term by term.
At first order, \(z_3\) produces itself and can feed states that record a
previous strain-3 infection; those states cannot produce homologous strain-3
infection. This gives \eqref{eq:blockJ}. Infection and recovery by strains 1
and 2 only transfer mass within this history block. Summing its
homogeneous equations leaves background
death and, for uninfected states whose recorded history includes strain 3,
additional memory reset, so
\(\frac{\dd}{\dd t}\one^\top h_3\le-d\one^\top h_3\). Because this linear
subsystem preserves nonnegativity, the stated \(\ell^1\)-norm bound follows.
\end{proof}

Thus the initial growth of strain 3 is governed entirely by four active
strain-3 infection states. The remaining new states carry a history of strain
3 after recovery; they receive first-order input from active infection but
cannot feed back into it because homologous reinfection is excluded.

The trivial Floquet multiplier associated with time translation belongs to
\(J_{12}\), not to \(L_3\). None of the four transverse invasion multipliers
may therefore
be discarded as a phase mode.

\subsection{Equilibrium and periodic invasion thresholds}

At an equilibrium, the environment experienced by a rare strain is constant,
so a next-generation matrix compares new infections with removals. Its
spectral radius is the invasion reproduction number: values above one mean
initial growth, whereas values below one mean initial decline. Along a
resident cycle, that environment changes over time. The appropriate analogue
is the one-period linear map, or monodromy matrix: its principal Floquet
multiplier governs the asymptotic growth per period of positive infinitesimal
strain-3 perturbations.

\begin{proposition}[Invasion criteria at equilibrium and periodic resident attractors]
\label{prop:criteria}
Let \(a=u(E)\odot c_3\) at a resident equilibrium \(E\in\cF_{12}\). Then the
rank-one next-generation matrix and strain-3 invasion reproduction number at
\(E\) are
\[
\cK_E=a(\tau_3/\nu_3)^\top,\qquad
\mathcal R_{3\mid E}=(\tau_3/\nu_3)^\top a,
\]
and the spectral bound \(r_3(E)=\sbound(L_3(E))\) has the sign of
\(\mathcal R_{3\mid E}-1\).

If \(\Gamma\subset\cF_{12}\) is a hyperbolic orbitally asymptotically stable
resident cycle of period \(T\), with every nontrivial resident Floquet
multiplier strictly inside the unit circle, let
\[
\dot\Phi_3=L_3(t)\Phi_3,\quad \Phi_3(0)=I_4,\qquad \cM_3=\Phi_3(T).
\]
The periodic invasion multiplier and exponent are
\[
\mu_3(\Gamma)=\rho(\cM_3),\qquad
r_3(\Gamma)=T^{-1}\log\mu_3(\Gamma).
\]
The resident cycle is linearly stable with respect to strain-3 perturbations
when \(r_3(\Gamma)<0\) and linearly unstable with respect to them when
\(r_3(\Gamma)>0\). Equivalently, the principal invasion multiplier is below
or above one, respectively.
\end{proposition}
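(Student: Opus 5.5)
The proof splits into the equilibrium case and the periodic case. In both, the key fact is that \(L_3\) is a Metzler matrix. Its off-diagonal entries \(u_A c_{3,A}\tau_{3,B}\) are nonnegative, because \(u\ge0\) on \(\D_7\) by Proposition~\ref{prop:simplex8} and \(c_3,\tau_3\ge0\). Its diagonal part \(-\operatorname{diag}(\nu_3)\) has \(\nu_{3,A}\ge d>0\).

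\emph{Equilibrium case.} At \(E\), split \(L_3(E)=F-V\) with \(F=a\tau_3^\top\ge0\) and \(V=\operatorname{diag}(\nu_3)\). Here \(V\) is a nonsingular M-matrix with \(V^{-1}=\operatorname{diag}(1/\nu_3)\ge0\). The standard result of \citet{VanDenDriesscheWatmough2002} for Metzler splittings gives \(\sbound(F-V)<0\iff\rho(FV^{-1})<1\), with equality cases matching. The next-generation matrix is \(FV^{-1}=a(\tau_3/\nu_3)^\top\). It has rank one, so its only nonzero eigenvalue is the trace \((\tau_3/\nu_3)^\top a\). This yields \(\cK_E\) and \(\mathcal R_{3\mid E}\) as stated, and the sign of \(r_3(E)\) is that of \(\mathcal R_{3\mid E}-1\). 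If one wants a self-contained argument instead of citing, the characteristic equation of \(a\tau_3^\top-V\) can be used directly. Via the matrix determinant lemma, a real \(\lambda>-\min\nu_3\) is an eigenvalue iff \(\sum_A a_A\tau_{3,A}/(\nu_{3,A}+\lambda)=1\). The left side is strictly decreasing in \(\lambda\), so its value at \(\lambda=0\) decides the sign of the principal root.

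\emph{Periodic case.} Along \(\Gamma\), \(L_3(t)\) is continuous, \(T\)-periodic and Metzler. So \(\Phi_3(t)\ge0\) entrywise, and in fact \(\Phi_3(T)\gg0\) whenever the coupling is irreducible, e.g.\ all \(\tau_{3,A}>0\) and some \(u_Ac_{3,A}>0\). Perron--Frobenius then makes \(\mu_3=\rho(\cM_3)\) a simple dominant eigenvalue with a positive eigenvector. For general nonnegative \(\cM_3\), \(\rho\) is still an eigenvalue with a nonnegative eigenvector. Floquet theory gives \(\|\Phi_3(t)\|\le Ce^{(r_3+\epsilon)t}\) for every \(\epsilon>0\), so \(r_3<0\) yields exponential decay of all transverse \(z_3\) perturbations. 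Conversely, if \(r_3>0\), the Perron vector \(v\ge0\) gives \(\Phi_3(kT)v=\mu_3^kv\), a nonnegative solution growing geometrically.

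To translate this into linear (in)stability of \(\Gamma\) inside the full 20-state system, I would use the block structure \eqref{eq:blockJ} of Proposition~\ref{prop:block}. The full monodromy matrix is block lower triangular. Its spectrum is therefore the union of three sets: the spectrum of \(\cM_3\); the spectrum of the \(h_3\)-block, which lies in the disc of radius \(e^{-dT}<1\) by the \(\ell^1\) bound; and the resident multipliers. The resident multipliers are the trivial \(1\) together with the others, which by hypothesis lie strictly inside the unit circle. Hence the nontrivial spectrum lies strictly inside the unit circle iff \(\mu_3<1\), and some multiplier lies outside iff \(\mu_3>1\). The trivial multiplier \(1\) belongs to \(J_{12}\), as already noted.

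The main obstacle is the careful handling of the triangular structure. One must make sure the coupling blocks \(C,B_z,B_h\) do not create Jordan-type growth that spoils the decay statement when \(\mu_3<1\). I would handle this by noting that block-triangular variation of constants, with strictly contracting diagonal blocks, gives exponential decay transverse to the phase direction. Orbital stability then follows by the standard Poincaré-section argument.
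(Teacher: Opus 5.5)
Your proposal is correct and follows the paper's route: the standard next-generation threshold for the splitting $a\tau_3^\top-\operatorname{diag}(\nu_3)$ with rank-one $\cK_E$ at $E$, Floquet and Perron--Frobenius theory for the nonnegative strain-3 block along $\Gamma$, and the block-triangular structure of Proposition~\ref{prop:block} (which confines the $h_3$ multipliers to radius at most $\e^{-dT}$) to show that nothing else changes the threshold; you simply supply much more detail than the paper does. One harmless slip: $\Phi_3(T)\gg0$ requires $u_Ac_{3,A}$ to be positive somewhere on the orbit for every class $A$, not just for some $A$, but your argument never uses strict positivity, because $\rho(\cM_3)$ is an eigenvalue with a nonnegative eigenvector for any nonnegative $\cM_3$.
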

\begin{proof}
At equilibrium, the infection operator is the rank-one matrix
\(a\tau_3^\top\), and the removal operator is positive diagonal. The standard
next-generation threshold gives the stated sign relation. Along a
periodic resident orbit, Floquet theory applied to the positive strain-3
invasion block
gives the second statement. Proposition~\ref{prop:block} excludes the stable
history block from changing this threshold.
\end{proof}

For the numerical examples, infectiousness and total removal do not depend on
pre-infection history:
\[
\tau_{3,A}=\tau_0,\qquad \nu_{3,A}=\nu_0.
\]
This special case exposes the mechanism without eliminating the general
Floquet formulation: the invasion exponent depends only on a weighted sum of
the resident immune history fractions averaged over time.

For an ergodic invariant probability measure \(\mathfrak m\) of the resident
subsystem, define
\(r_3(\mathfrak m)\) as the top Lyapunov exponent of the strain-3 invasion
block along
\(\mathfrak m\)-almost every resident trajectory; it exists under the bounded
coefficients here by the multiplicative ergodic theorem. An equilibrium gives
a Dirac invariant measure, and a periodic orbit gives its normalized
occupation measure.

\begin{corollary}[Reduction for invariant measures]
\label{cor:measure}
For an ergodic invariant probability measure \(\mathfrak m\) of the resident
subsystem, let
\(\bar u_{\mathfrak m}=\int u\,\dd\mathfrak m\). Suppose strain-3 infectiousness
and total removal are independent of pre-infection history across the four
active infection classes. Then
\begin{equation}
r_3(\mathfrak m)=\tau_0c_3^\top\bar u_{\mathfrak m}-\nu_0.
\label{eq:measure_r}
\end{equation}
For a period-\(T\) orbit, its principal Floquet multiplier, the Perron root
of \(\cM_3\), is \(\exp\{T r_3(\Gamma)\}\).
\end{corollary}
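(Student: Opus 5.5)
With \(\tau_{3,A}=\tau_0\) and \(\nu_{3,A}=\nu_0\), the matrix \eqref{eq:L3} becomes \(L_3(t)=\tau_0\,(u(t)\odot c_3)\one^\top-\nu_0 I_4\), a rank-one infection term plus a scalar multiple of the identity. The plan is to remove the scalar part by writing \(z_3(t)=\e^{-\nu_0 t}w(t)\), so that \(\dot w=\tau_0 a(t)\one^\top w\) with \(a(t)=u(t)\odot c_3\ge0\). The scalar \(s(t)=\one^\top w\), the total active strain-3 infection up to the factor \(\e^{\nu_0 t}\), then satisfies the closed scalar equation \(\dot s=\tau_0\,(c_3^\top u(t))\,s\). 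Hence
\[
\one^\top z_3(t)=\exp\Bigl\{\int_0^t\bigl(\tau_0c_3^\top u(s')-\nu_0\bigr)\dd s'\Bigr\}\,\one^\top z_3(0).
\]
The remaining directions are \(w\) with \(\one^\top w=0\). On that hyperplane \(\dot w=0\), so in \(z_3\)-coordinates these directions decay at rate exactly \(\nu_0\). I will therefore write the fundamental matrix explicitly:
\[
\Phi_3(t)=\e^{-\nu_0t}\Bigl[I_4+\Bigl(\int_0^t a(\sigma)\,\tau_0\e^{\tau_0\int_0^\sigma c_3^\top u}\dd\sigma\Bigr)\one^\top\Bigr].
\]
This is checked by differentiation using \(\one^\top a=c_3^\top u\).

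For the periodic case I evaluate this at \(t=T\). \(\cM_3\) is \(\e^{-\nu_0T}\) times identity-plus-rank-one, so its eigenvalues are \(\e^{-\nu_0T}\), with multiplicity three on \(\one^\perp\), and \(\e^{-\nu_0T}(1+\one^\top v)\), where \(v\) is the integral vector. A direct integration gives \(1+\one^\top v=\exp\{\tau_0\int_0^Tc_3^\top u\}\). Because \(a\ge0\), this eigenvalue is at least the others and is the Perron root. Writing \(\int_0^T u=T\bar u_\Gamma\) with \(\bar u_\Gamma\) the occupation-measure mean, this gives \(\rho(\cM_3)=\exp\{T(\tau_0c_3^\top\bar u_\Gamma-\nu_0)\}\). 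That is the final claim, and it also gives \eqref{eq:measure_r} for \(\Gamma\) via the definition in Proposition~\ref{prop:criteria}. The equilibrium case is the constant-\(u\) specialization: the spectral bound of \(\tau_0 a\one^\top-\nu_0I\) is \(\tau_0\one^\top a-\nu_0\).

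For a general ergodic \(\mathfrak m\), the top Lyapunov exponent is \(\lim t^{-1}\log\|\Phi_3(t)\|\). I will bound \(\|\Phi_3(t)\|_1\), the maximum column sum, since every column has sum \(\one^\top\Phi_3 e_j\). The explicit formula gives each column sum exactly as \(\exp\{\int_0^t(\tau_0c_3^\top u-\nu_0)\}\), because the identity part contributes \(1\) and the rank-one part contributes the same telescoping integral. Birkhoff's ergodic theorem then gives, for \(\mathfrak m\)-a.e.\ starting point, \(t^{-1}\int_0^tc_3^\top u\to c_3^\top\bar u_{\mathfrak m}\), which is \eqref{eq:measure_r}.

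The main obstacle is bookkeeping rather than depth: verifying the closed-form \(\Phi_3\) and making sure that the top exponent is exactly the column-sum rate, not only bounded by it. Nonnegativity of \(\Phi_3\) settles this, because for nonnegative matrices the \(\ell^1\) operator norm equals the maximum column sum. All columns here have identical sums, which pins the exponent down exactly.
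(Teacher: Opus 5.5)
Your proposal is correct and follows essentially the same route as the paper. You use \(L_3(t)=\tau_0(u(t)\odot c_3)\one^\top-\nu_0I\) to obtain a closed scalar equation for \(\one^\top z_3\), and you note that \(\ker(\one^\top)\) is invariant with exponent exactly \(-\nu_0\). Your explicit fundamental matrix \(\e^{-\nu_0t}[I+v(t)\one^\top]\) and the exact identity \(\|\Phi_3(t)\|_1=\exp\{\int_0^t(\tau_0c_3^\top u-\nu_0)\}\) make fully explicit the step the paper handles briefly, namely that the scalar exponent is at least \(-\nu_0\) and is therefore the top one.
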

\begin{proof}
Writing \(Z_3=\one^\top z_3\) and summing the strain-3 invasion equations gives the scalar
linear equation
\[
\dot Z_3=\{\tau_0c_3^\top u(t)-\nu_0\}Z_3.
\]
Moreover, because
\(L_3(t)=\tau_0(u(t)\odot c_3)\one^\top-\nu_0I\), the subspace
\(\ker(\one^\top)\) is invariant and has exponent \(-\nu_0\). The scalar
exponent obtained from \(Z_3\) is
\(\tau_0c_3^\top\bar u_{\mathfrak m}-\nu_0\ge-\nu_0\), so it is the top
Lyapunov exponent of the strain-3 invasion block. Time averaging proves
\eqref{eq:measure_r}; one-period integration gives the principal Floquet
multiplier formula.
\end{proof}

For \(\alpha_3>0\), \(\sigma_{31}>0\), \(\sigma_{32}>0\), and
\(\theta_{312}\in\R\), we parameterize the invader's history-specific
effective transmission coefficients by
\begin{equation}
c_3=\alpha_3\bm w_3(\sigma_{31},\sigma_{32},\theta_{312}),\qquad
\bm w_3=(1,\sigma_{31},\sigma_{32},
\sigma_{31}\sigma_{32}\e^{\theta_{312}})^\top.
\label{eq:invader_parameters}
\end{equation}
The examples set \(\theta_{312}=0\). The effects of histories 1 and 2
therefore combine multiplicatively; the benchmark contains no extra interaction term for
having both prior infections.

\begin{corollary}[Open regions of invader parameter space]
\label{cor:open}
Assume \(\tau_0>0\) and \(\nu_0>0\). Fix
\((\sigma_{31},\sigma_{32},\theta_{312})\) and write
\[
D_E=\bm w_3^\top u(E),\qquad
D_\Gamma=\bm w_3^\top\overline{u(\Gamma)},\qquad
\alpha_A^*=\frac{\nu_0}{\tau_0D_A}.
\]
If \(D_\Gamma>D_E>0\), then
\[
\alpha_\Gamma^*<\alpha_3<\alpha_E^*
\quad\Longrightarrow\quad r_3(E)<0<r_3(\Gamma).
\]
If \(D_E>D_\Gamma>0\), the reversed interval gives
\(r_3(\Gamma)<0<r_3(E)\). Every strict sign ordering persists in an open
neighborhood of the invader parameter vector.
\end{corollary}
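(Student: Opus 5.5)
The plan is to reduce everything to the explicit formula of Corollary~\ref{cor:measure} and then read off the signs by elementary algebra in \(\alpha_3\). Under the standing assumption \(\tau_{3,A}=\tau_0\), \(\nu_{3,A}=\nu_0\), substituting \(c_3=\alpha_3\bm w_3\) from \eqref{eq:invader_parameters} into \eqref{eq:measure_r} gives two affine expressions. For the Dirac measure at \(E\),
\[
r_3(E)=\tau_0\alpha_3 D_E-\nu_0 .
\]
For the normalized occupation measure of \(\Gamma\), where \(\bar u\) is the one-period time average \(\overline{u(\Gamma)}\),
\[
r_3(\Gamma)=\tau_0\alpha_3D_\Gamma-\nu_0 .
\]
I would also check that \(r_3(E)\) agrees with the spectral bound in Proposition~\ref{prop:criteria}. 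At equilibrium, \(L_3(E)=\tau_0(a)\one^\top-\nu_0 I\) has eigenvalues \(\tau_0\one^\top a-\nu_0\) and \(-\nu_0\). So \(\sbound(L_3(E))=\tau_0\alpha_3D_E-\nu_0\), which is consistent with the measure formula for a Dirac mass.

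Since \(\tau_0>0\) and \(D_A>0\), each \(r_3(A)\) is strictly increasing in \(\alpha_3\) and vanishes exactly at \(\alpha_A^*=\nu_0/(\tau_0D_A)\). Hence \(r_3(A)<0\iff\alpha_3<\alpha_A^*\) and \(r_3(A)>0\iff\alpha_3>\alpha_A^*\). If \(D_\Gamma>D_E>0\), then \(\alpha_\Gamma^*<\alpha_E^*\), so the interval is nonempty. Any \(\alpha_3\) in it gives \(r_3(E)<0<r_3(\Gamma)\). The case \(D_E>D_\Gamma>0\) is symmetric: swap the labels, giving \(\alpha_E^*<\alpha_3<\alpha_\Gamma^*\) and \(r_3(\Gamma)<0<r_3(E)\).

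For openness, I note that the resident quantities \(u(E)\) and \(\overline{u(\Gamma)}\) are fixed, because the invader parameters do not enter the resident face \(\cF_{12}\) (Proposition~\ref{prop:block}). Each \(D_A\) is then a polynomial in \(\sigma_{31},\sigma_{32}\) and \(\e^{\theta_{312}}\), hence continuous. Therefore
\[
(\alpha_3,\sigma_{31},\sigma_{32},\theta_{312},\tau_0,\nu_0)\mapsto r_3(A)=\tau_0\alpha_3 D_A(\sigma,\theta)-\nu_0
\]
is continuous for \(A\in\{E,\Gamma\}\). A strict inequality \(r_3(E)<0<r_3(\Gamma)\), or its reverse, is the preimage of an open set under a continuous map, so it holds on an open neighborhood of the given invader parameter vector. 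Positivity of \(D_A\) is also open, which keeps the thresholds well defined nearby.

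I expect the main obstacle to be making sure that \(r_3(\Gamma)\), defined through the Floquet multiplier, really equals the averaged expression and that \(\Gamma\) does not move with the invader parameters. I would handle the first point by citing Corollary~\ref{cor:measure}, whose proof shows that the Perron root of \(\cM_3\) is \(\exp\{T(\tau_0c_3^\top\bar u-\nu_0)\}\). The second point holds because the invader parameters enter only the strain-3 block \(L_3\), not \(J_{12}\).
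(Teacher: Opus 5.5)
Your proposal is correct and follows essentially the same route as the paper: substitute \(c_3=\alpha_3\bm w_3\) into \eqref{eq:measure_r} to get exponents that are affine in \(\alpha_3\), order the thresholds by inverting \(D_E\) and \(D_\Gamma\), and use continuity in the invader parameters to get persistence of the strict sign ordering. Your extra checks make explicit what the paper's brief proof leaves implicit: that \(\sbound(L_3(E))\) matches the Dirac-measure formula, and that the resident attractors do not depend on the strain-3 parameters.
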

\begin{proof}
Substitution of \eqref{eq:invader_parameters} into \eqref{eq:measure_r} makes each exponent
affine in the positive scale \(\alpha_3\). The threshold ordering follows by
inverting \(D_E\) and \(D_\Gamma\). Continuity in all invader parameters
preserves strict inequalities locally.
\end{proof}

\section{Numerical methods and verification}
\label{sec:methods}

All stability calculations respect the simplex constraint. Coexistence
equilibria are corrected in seven tangent coordinates, and the augmented Hopf
system solves simultaneously for the equilibrium, critical eigenvectors,
frequency, and parameters. Lyapunov coefficients use the factorial convention
stated in Section~\ref{sec:resident}; the second coefficient is obtained from
the fifth-order homological equations. Pseudo-arclength continuation follows
the Hopf locus through a turning point in its projection onto the parameter plane.

Periodic orbits are represented by multiple shooting on the seven-dimensional
tangent space with a frozen phase condition. The LPC equation uses
a minimally augmented singularity test for the fixed-parameter shooting
Jacobian. Reported points use 16 shooting segments and are checked at selected
parameters with 32 segments and with both DOP853 and Radau.
Resident Floquet multipliers are computed from the cyclic generalized
eigenvalue problem formed by the segment variational maps and are cross-checked
against a monodromy product projected onto a Poincar\'e section; the direction normal to the
simplex is excluded. The LPC curve is also compared with independent
one-parameter continuation calculations at fixed parameter values.

Strain-3 invasion on the resident cycle is evaluated by integrating the
resident orbit and the \(4\times4\) invasion variational equation jointly, avoiding
interpolation of a stored cycle. The principal multiplier is cross-checked
across solver, tolerance, and initial phase. The exact formula for the
history-independent case in
Corollary~\ref{cor:measure} supplies an independent exact reduction. Finally,
finite inoculations transfer an inoculum \(\varepsilon\) from \(S\) to
\(X_{\varnothing,3}\) in the full 20-state system. One-period endpoint growth
is compared with the prediction from the strain-3 invasion block for three inoculum sizes, several
cycle phases, and two solvers.

The computations provide reproducible floating-point evidence rather than
proofs based on interval arithmetic. Each calculation is accompanied by numerical
tolerances, source and input hashes, restart or trajectory data, and a file
manifest. We report numerical results only when checks of residuals, conditioning,
agreement between solvers, nonnegativity, and mass conservation pass. The
supplement gives the full 20-state parameter map, algorithms, tolerances,
worst-case diagnostic values, and file locations needed for reproduction.

Unless stated otherwise, numerical values in the main text are rounded to
about six significant digits. All calculations use the full-precision values
stored in the archived result files.

\section{Resident bistability organized by Bautin and LPC bifurcations}
\label{sec:resident_results}

At the parameter point used for the invasion experiment,
\[
p_0=(\gamma_2,\eta)=(1,10^{-3}),
\]
the resident strains can approach either a stable endemic equilibrium or a
stable recurring outbreak cycle. The spectral abscissa of the Jacobian
restricted to the simplex tangent space, \(J_\D\), is approximately \(-1.346\times10^{-3}\). The stable
cycle has period \(T\approx510.05\), and the maximum modulus of its
nontrivial resident Floquet multipliers is \(0.4834\), well
inside the unit circle.

Figure~\ref{fig:resident}a shows their bistability on the
\(\eta=10^{-3}\) slice. The equilibrium loses stability at the lower Hopf point
\(H_1\), regains it at the upper Hopf point \(H_2\), and is stable at \(p_0\).
The supercritical lower Hopf produces a branch of stable periodic orbits. The
subcritical upper Hopf produces a branch of unstable small-amplitude periodic
orbits. That unstable branch meets the stable large-amplitude branch at the
LPC. Consequently, for
\(H_2<\gamma_2<\gamma_{2,F}\), including \(p_0\), the stable equilibrium and
stable large-amplitude cycle are both stable, separated locally by the
unstable cycle. Panel (b)
shows that these are dynamically distinct host environments: the equilibrium
has constant prevalence, whereas the cycle contains alternating peaks of the
two strains.

The one-parameter picture is part of a two-parameter structure. In
Figure~\ref{fig:resident}c the two Hopf points belong to one regular Hopf arc in
\((\gamma_2,\eta)\). The LPC curve approaches that arc where the first
Lyapunov coefficient changes sign. This is the generalized Hopf, or Bautin,
point. The shaded region is the candidate bistability region bounded by the
computed upper Hopf and LPC branches; \(p_0\), where both resident
attractors are verified directly, is marked explicitly. The local unfolding
of the Bautin point gives rise to the nearby Hopf and LPC branches and the
associated resident attractors.
It does not itself set the strain-3 invasion sign.

\begin{computationalresult}[Computed resident bifurcation structure]
\label{result:bautin}
Numerical solution of the augmented system locates the generalized Hopf point at
\[
(\gamma_{2,B},\eta_B)\approx(0.576793,\,0.00309825),
\]
with frequency \(\omega_B\approx0.0256644\), \(l_1\) numerically zero, and
\(l_2\approx-399.75\). All other tangent-space eigenvalues have negative real parts,
and the Hopf crossing and \(l_1\)-variation are nondegenerate.

A computed sequence of 22 numerically nondegenerate, finite-amplitude LPC
points spans
\(10^{-3}\le\eta\le0.0030925\), from
\[
(\gamma_{2,F},T_F)\approx(1.21018,470.285)
\]
to
\[
(\gamma_{2,F},T_F)\approx(0.577418,245.081).
\]
Its amplitude, period, parameter separation between the LPC and Hopf curves, and approach
direction in parameter space are
quantitatively consistent with the Bautin normal form coefficients. The last
orbit has finite amplitude \(9.79\times10^{-3}\); the computation does not
represent the Bautin point itself as a periodic boundary-value solution.
\end{computationalresult}

\begin{figure}[p]
\centering
\includegraphics[width=\textwidth]{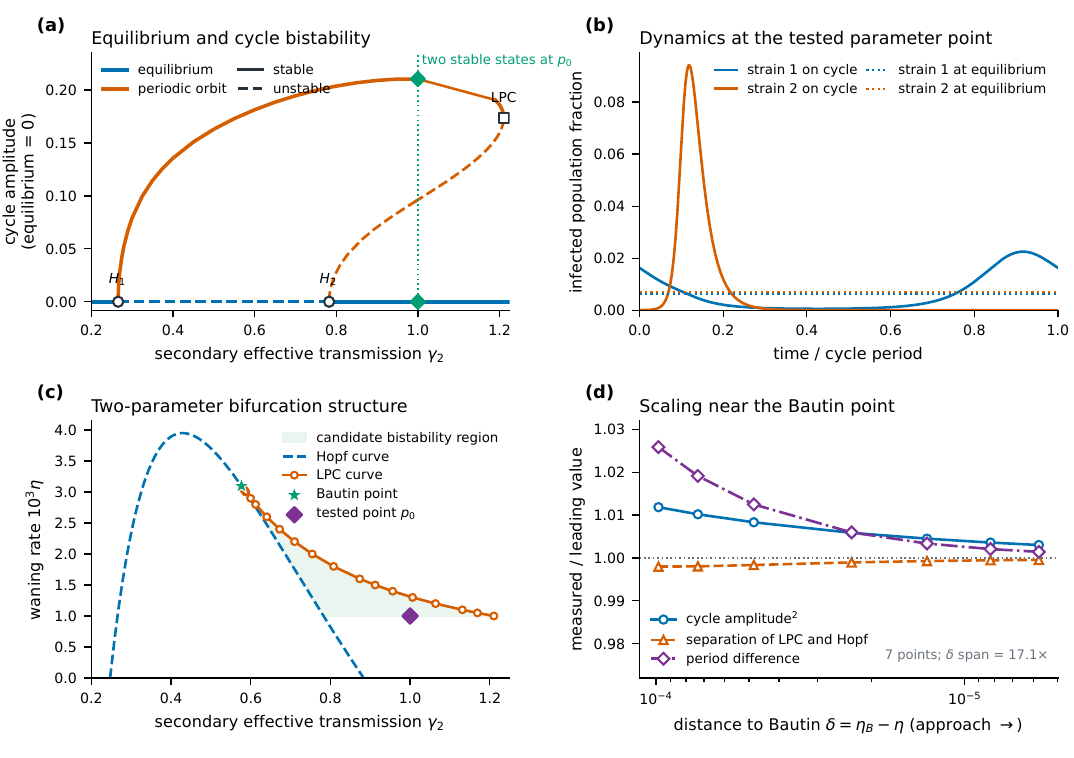}
\caption{Resident dynamics and the bifurcations that organize them. (a) Branch
diagram at \(\eta=10^{-3}\). The equilibrium is drawn at zero cycle amplitude;
solid and dashed segments denote stable and unstable equilibria or periodic
orbits. The two green
diamonds at \(p_0\) mark the stable equilibrium and stable large-amplitude
cycle. \(H_1\) and \(H_2\) are Hopf points; LPC denotes a limit point of
cycles, equivalently a fold of periodic orbits.
(b) Total prevalence of strains 1 and 2 over the stable cycle at \(p_0\);
dotted lines give their constant equilibrium values. (c) Hopf and LPC
curves in the two-parameter plane. The lightly shaded candidate bistability
region is bounded by the computed upper Hopf and LPC branches; stability of both attractors is verified
at \(p_0\), not throughout every interior parameter value. The plotted LPC
sequence stops at its last computed finite-amplitude orbit and is not
extrapolated to the Bautin point. (d) For the seven LPC points closest to
Bautin, three measured quantities are divided by their normal form leading
terms. Their approach to one is consistent with the local Bautin scaling over
the resolved window.}
\label{fig:resident}
\end{figure}

Figure~\ref{fig:resident}d makes the local connection quantitative. The seven
computed LPC points closest to Bautin span a factor 17.1 in
\(\delta=\eta_B-\eta\). The observed squared cycle amplitude,
parameter separation between the LPC and Hopf curves, and period difference approach their normal form leading
values together. Because the nested endpoint windows do not each span a full
decade in \(\delta\), the available computation supports the local scaling
over a finite window; the branch was not continued to zero amplitude. The
separation of the one-dimensional shooting Jacobian nullspace and the distances
of the remaining Floquet multipliers from thresholds for secondary bifurcations are
shown in Supplementary Figure~S1.

Table~\ref{tab:bifurcation} collects the principal branch, stability, and
normal form diagnostics used in this resident calculation.

\begin{table}[h]
\centering
\caption{Key resident bifurcation and stability quantities. Values are rounded
floating-point results; \(l_1\) uses the factorial convention in the text.}
\label{tab:bifurcation}
\begin{tabular}{lrr}
\toprule
Object & Parameter/value & Diagnostic\\
\midrule
lower Hopf, \(\eta=10^{-3}\) & \(\gamma_2\approx0.265816\) & \(l_1\approx-43.71\)\\
upper Hopf, \(\eta=10^{-3}\) & \(\gamma_2\approx0.782566\) & \(l_1\approx35.17\)\\
Bautin & \((\gamma_2,\eta)\approx(0.576793,0.00309825)\) & \(l_2\approx-399.75\)\\
LPC, \(\eta=10^{-3}\) & \(\gamma_2\approx1.21018\) & \(T\approx470.285\)\\
resident equilibrium at \(p_0\) & N/A & \(s(J_\D)\approx-1.346\times10^{-3}\)\\
resident cycle at \(p_0\) & \(T\approx510.05\) & \(\rho_{\rm nt}\approx0.4834\)\\
\bottomrule
\end{tabular}
\end{table}
\section{Invasion by a third strain depends on the resident attractor}
\label{sec:invasion_results}

The stable equilibrium and stable cycle at \(p_0\) contain the same resident
strains \(\{1,2\}\) but leave different fractions of uninfected hosts in each
immune history class. In the order
\((S,R_1,R_2,P_{12})\), the equilibrium distribution is
\[
(0.29495,0.07061,0.11616,0.50512),
\]
whereas the time average over the cycle is
\[
(0.30194,0.06991,0.09518,0.51972).
\]
The four entries do not sum to one because currently infected residents are
not included. Figure~\ref{fig:invasion}a compares the equilibrium values with
the cycle means; its error bars show the full range over one
cycle, not confidence intervals. Although the means appear close, their
weighted difference is enough to change the sign of strain-3 growth. In this
benchmark with history-independent infectiousness and total removal,
Corollary~\ref{cor:measure} shows that only the mean host composition determines
the exact invasion exponent. The order of fluctuations within the cycle does
not affect it in this special case.

For strain 3, we set \(\tau_0=1\) and
\(\beta_{3,A}=0.1\), so \(\nu_0=0.1001\). We further set
\(\sigma_{31}=0.2\) and \(\theta_{312}=0\), and vary
\((\alpha_3,\sigma_{32})\). The two threshold curves \(r_3(E)=0\) and
\(r_3(\Gamma)=0\) exchange order at \(\sigma_{32}\approx0.379496\)
(Figure~\ref{fig:invasion}b). Here \(\sigma_{32}\) is the effective transmission
coefficient after strain 2 divided by the corresponding coefficient in a host
with no recorded history. For
\(\alpha_3\) between the two threshold curves, the two sides of their crossing
give opposite signs of invasion growth for the two attractors.

\begin{computationalresult}[Opposite invasion signs across resident attractors]
\label{result:invasion}
At the same resident parameter point \(p_0\):
\begin{enumerate}[label=(\roman*)]
\item For fixed invader parameter set A, strain 3 grows only near the cycle:
\(\sigma_{31}=0.2\), \(\sigma_{32}=0.1\), and \(\alpha_3\approx0.300338\),
\[
r_3(E)\approx-7.51957\times10^{-4}<0<r_3(\Gamma)\approx7.63427\times10^{-4},
\]
and the periodic principal multiplier is approximately \(1.4761\).
\item For fixed invader parameter set B, strain 3 grows only near the equilibrium:
\(\sigma_{31}=0.2\), \(\sigma_{32}=1\), and \(\alpha_3\approx0.192283\),
\[
r_3(\Gamma)\approx-1.06535\times10^{-3}<0<r_3(E)\approx1.08852\times10^{-3},
\]
and the periodic principal multiplier is approximately \(0.5808\).
\end{enumerate}
Within each comparison, the resident parameter vector and invader parameter set are
fixed; only the resident attractor changes.
\end{computationalresult}

Table~\ref{tab:invasion} reports the two fixed invader parameter sets and their
equilibrium and periodic invasion diagnostics. No mutation process is modeled.

\begin{table}[t]
\centering
\caption{Selected parameter sets for the third strain and invasion diagnostics at \(p_0\).
The two rows are different invader parameter sets; each row compares that same invader
against the two resident attractors.}
\label{tab:invasion}
\begin{tabular}{lrrrrr}
\toprule
Parameter set & \(\sigma_{32}\) & \(\alpha_3\) & \(r_3(E)\) & \(r_3(\Gamma)\) & \(\rho(\cM_3)\)\\
\midrule
set A: cycle growth & 0.1 & 0.300338 & \(-7.52\!\times\!10^{-4}\) & \(7.63\!\times\!10^{-4}\) & 1.4761\\
set B: equilibrium growth & 1.0 & 0.192283 & \(1.09\!\times\!10^{-3}\) & \(-1.07\!\times\!10^{-3}\) & 0.5808\\
\bottomrule
\end{tabular}
\end{table}

These are not isolated points. Both exponents are strictly increasing in
positive \(\alpha_3\) and \(\sigma_{32}\). The width of the interval between
the two thresholds is
\[
\left|\alpha_E^*-\alpha_\Gamma^*\right|
=\frac{\nu_0\left|D_\Gamma-D_E\right|}{\tau_0D_ED_\Gamma},
\]
which makes explicit how a difference in the weighted host environment creates
the region in Figure~\ref{fig:invasion}b where the sign depends on the attractor. For parameter
set A, the
rectangle
\[
\alpha_3\in[0.298837,0.301840],\qquad
\sigma_{32}\in[0.099,0.101]
\]
has \(r_3(E)<0<r_3(\Gamma)\) at the relevant corners; the minimum absolute
one-period log growth among these corner inequalities is \(0.0967\). For
parameter set B,
\[
\alpha_3\in[0.191322,0.193244],\qquad
\sigma_{32}\in[0.99,1.01]
\]
has the reversed inequalities; the corresponding minimum absolute value is
\(0.0852\). Analytic
monotonicity combined with the numerical corner values verifies the sign
pattern throughout each rectangle in floating-point arithmetic. The interiors
are the open regions asserted in Corollary~\ref{cor:open}; this remains an
slice through invader parameter space at fixed resident parameters, not a global parameter
classification.

\begin{figure}[p]
\centering
\includegraphics[width=\textwidth]{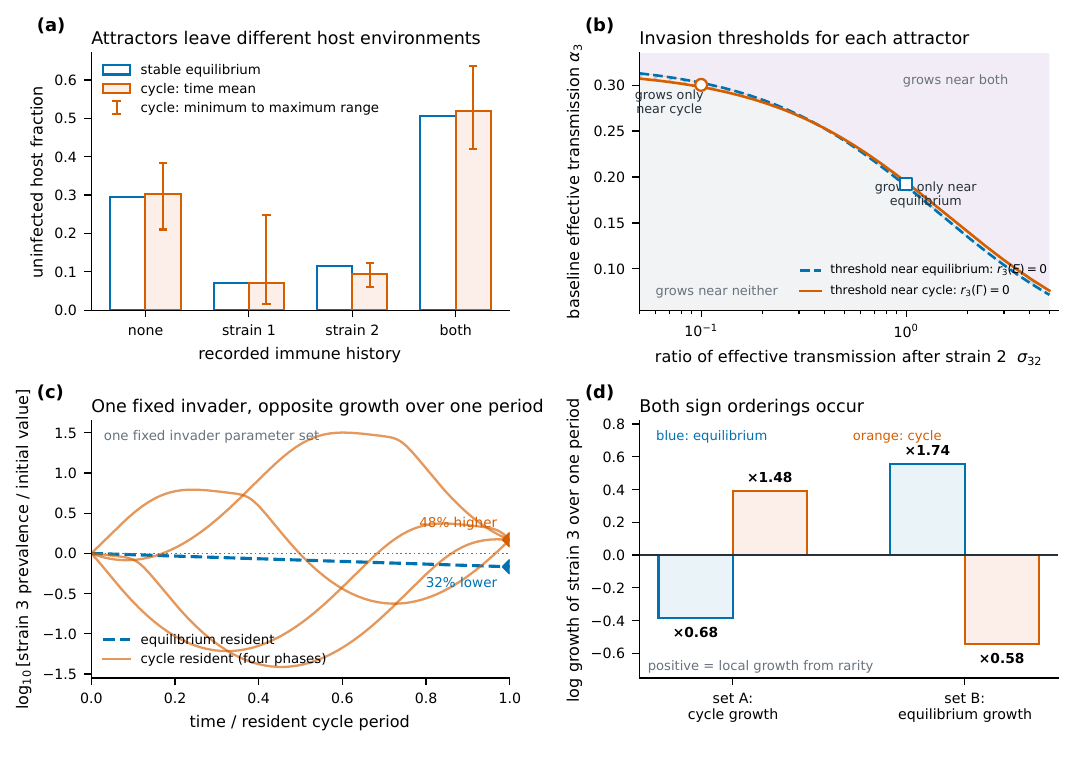}
\caption{Invasion from rarity depends on the attractor at
\((\gamma_2,\eta)=(1,10^{-3})\). (a) Fractions in the four uninfected
immune history classes available to strain 3. Blue bars are equilibrium
values; orange bars and whiskers are the cycle averages and full
range from minimum to maximum over the cycle. (b) Curves on which the invasion exponent of strain 3 is zero near the
equilibrium and cycle. Below both curves strain 3 declines near both
attractors; above both it grows near both. Between the curves, the growth sign
depends on the attractor. The circle and square mark parameter sets A and B. (c) Full
20-state simulations for the same fixed parameter set A. Near the equilibrium the
inoculum is \(32\%\) lower after one resident period. Four different starting
phases on the same resident cycle give different transient paths but the same
positive endpoint factor, \(1.48\). (d) One-period log growth for both fixed
parameter sets. Each pair holds the invader fixed while changing only the resident
attractor. Positive bars mean local growth from rarity, not long-term
establishment.}
\label{fig:invasion}
\end{figure}

The comparison with the full system comprises 60 one-period inoculation runs. All
states remain nonnegative to numerical tolerance, maximum mass error is
\(4.7\times10^{-15}\), and every observed sign agrees with the transverse
prediction. For inoculum sizes \(10^{-8},10^{-7},10^{-6}\), the log-log slopes
of nonlinear error against inoculum size lie between \(0.99910\) and \(0.99997\).
Changing the resident cycle phase or replacing DOP853 by Radau produces
differences that are negligible compared with the distances of the reported
values of log growth from zero. This
convergence is the expected local consistency check: as the inoculum tends to
zero, the nonlinear system approaches the strain-3 invasion variational
dynamics. The convergence curves as the inoculum tends to zero and the solver
overlays are retained as
Supplementary Figure~S2.

\section{Discussion}
\label{sec:discussion}

Knowing which resident strains are present is not always enough to determine
whether a rare third strain grows. At \(p_0\), the same two strains support two
stable long-term states. Those states leave different weighted distributions
of uninfected host histories. Holding every resident parameter and the invader
parameter set fixed, changing only the resident attractor reverses the invading
strain's local growth sign.

The general idea that invasion growth rates should be associated with
ergodic invariant measures supported by the boundary resident dynamics is
established in population dynamics
\citep{Schreiber2000,GeritzEtAl2002,HofbauerSchreiber2022}. Invasion-graph
results often require invasion signs to be consistent across ergodic invariant
measures with the same set of resident strains. The present model gives a concrete
epidemic example in which this consistency fails: the equilibrium and the cycle are both
stable, belong to the same strain-3-free invariant face, and give opposite invasion signs for
the same absent strain. The new contribution is therefore not the formulation of invasion
growth rates in terms of invariant measures itself, nor the first use of a periodic invasion criterion. It is the
construction specific to this model that joins autonomous bistability between an
equilibrium and a periodic orbit, an exact embedding with three strains, and a
sign reversal determined by the attractor.

The bifurcation and invasion calculations play different roles. The Bautin
point and LPC curve describe the local structure of bistability between the
equilibrium and periodic orbit. The
invasion reversal is controlled by
\[
\bm w_3^\top\{\overline{u(\Gamma)}-u(E)\}.
\]
At fixed \(\sigma_{31}=0.2\), this expression is affine in \(\sigma_{32}\) and
changes sign where the two threshold curves cross. The Bautin point does not
directly cause strain 3 to grow or decline; it supplies the alternative
resident attractors whose host compositions are then compared.

The benchmark also separates an explicit interaction between two prior
infections from an effect created by system feedback. We set
\(\theta_{312}=0\), so the effects of prior strains 1 and 2 on the effective
transmission coefficient for strain 3 multiply without an extra term for
having both histories. The sign
reversal nevertheless occurs because the two resident attractors weight those
histories differently. Such an explicit interaction is therefore not required
for this result. Allowing \(\theta_{312}\ne0\) would provide a natural next
test of how it combines with the host environment created by the resident
attractor.

When strain-3 infectiousness and total removal are independent of
pre-infection history across the four active infection classes, the periodic
exponent reduces exactly to a time average of the immune history distribution. The order of changes within the cycle does not affect
the exponent in this special case. The full Floquet calculation remains the
general criterion and supplies an independent numerical check. If
infectiousness or removal varies by history, however, the within-cycle order
can matter and the full time-ordered monodromy is then required; replacing it
with a simple average need not be valid.

The biological interpretation should remain modest. The benchmark deliberately
uses strong, phenomenological history-associated susceptibility enhancement
for strain 2 and complete
memory reset. These assumptions are useful for resolving the mechanism, but
they are not estimates for a named pathogen. An empirically calibrated extension would
need empirically estimated history-dependent transmission coefficients,
plausible waning rates, and
observation models that distinguish prevalence from host immune composition.

The claims are limited in four further ways. First, the Bautin and LPC results
are floating-point computations checked by independent numerical methods, not
interval-certified bifurcation theorems. The finite-amplitude LPC sequence is
consistent with the LPC branch approaching the generalized Hopf point with the
predicted normal form scaling, but the available nested endpoint windows do
not each span a full decade. Second, the resident
bistability is verified at the tested parameter point; we do not claim a
global basin decomposition. Third, a positive invasion exponent makes a resident
boundary attractor unstable to strain 3, but it does not identify the state
reached after strain 3 is no longer rare. Initial rare growth need not determine
subsequent frequency dynamics because the immune environment can be reshaped
during invasion \citep{BarratCharlaixNeher2024}; stable
three-strain coexistence, persistence, exclusion, and post-invasion cycles all
require a separate interior analysis. This distinction also matches modern
coexistence theory, where a positive boundary invasion growth rate is a local
diagnostic and permanence requires conditions across the relevant boundary
invariant measures or, under the assumptions of the theory of invasion graphs, across
the invasion graph \citep{HofbauerSchreiber2022}. Finally, examining the
Floquet multipliers at the 22 computed LPC points does not exclude secondary
bifurcations or chaos
elsewhere in parameter space.

These limitations suggest a focused next program: continue the two
attractor-specific invasion thresholds over the computed candidate
bistability region,
follow the dynamics of the full system after transverse loss of stability, and then
introduce independently parameterized interactions between prior infections. An extension to more than three strains
can build on the same formulation of invasion in terms of invariant measures and associated growth
rates, but the current paper deliberately resolves the mechanism with three strains first.

\FloatBarrier
\section{Conclusion}

At one parameter vector, a two-strain model with immune history supports both a
stable endemic equilibrium and a stable endemic cycle. A numerically resolved
Bautin point and LPC curve underlie this resident bistability. In the
exact three-strain embedding, the two resident attractors expose the same fixed
invader to different environments shaped by immune history and can give it opposite
local growth signs. The sign patterns persist over open regions of invader
parameter space
and are recovered by the full nonlinear model as the inoculum decreases.

The invasion threshold must therefore be defined relative to a particular
resident attractor, not only to the strains that are present. For the
equilibrium, the associated invariant measure records a fixed host
composition; for the cycle, it records how much time the system spends in
each part of the recurring trajectory. These two measures yield different
distributions of immune histories for rare strain 3 and, in our examples,
local growth rates with opposite signs.

\section*{Data and code availability}
Source code, numerical configurations, raw restart and trajectory data,
manifests, and figure inputs supporting this study are available from the
authors upon reasonable request.

\end{document}